\documentclass[11pt]{amsart}
\usepackage[margin=1in]{geometry}
\usepackage{amsmath,amssymb,amsthm}
\usepackage{enumitem}
\usepackage{booktabs}
\usepackage{tikz}
\usepackage[colorlinks=true,linkcolor=blue!50!black,citecolor=blue!50!black,urlcolor=blue!50!black]{hyperref}

\definecolor{accA}{HTML}{D9730D}
\definecolor{accB}{HTML}{1F5FBF}
\tikzset{
  body/.style={fill=black!9, draw=black, line width=0.9pt},
  insc/.style={draw=black, dashed, line width=0.55pt},
  diag/.style={draw=black, line width=0.45pt, dash pattern=on 1.2pt off 1.6pt},
  chordline/.style={draw=black, line width=0.5pt, dash pattern=on 2.4pt off 1.8pt},
  thinline/.style={draw=black, line width=0.4pt},
  dimline/.style={draw=black, line width=0.4pt, <->, >=stealth},
  circA/.style={draw=accA, line width=1.05pt},
  circB/.style={draw=accB, line width=1.05pt},
  pt/.style={circle, fill=black, inner sep=0.95pt},
  lab/.style={font=\footnotesize, inner sep=0.8pt},
}

\newtheorem{theorem}{Theorem}[section]
\newtheorem{lemma}[theorem]{Lemma}
\newtheorem{proposition}[theorem]{Proposition}
\newtheorem{corollary}[theorem]{Corollary}
\newtheorem{conjecture}[theorem]{Conjecture}
\theoremstyle{definition}
\newtheorem{definition}[theorem]{Definition}
\newtheorem{remark}[theorem]{Remark}

\newcommand{\cst}{c_*}
\newcommand{\qch}{q_{\mathrm{ch}}}
\newcommand{\vp}{\varphi}
\newcommand{\dist}{\operatorname{dist}}

\title{A note on Kuperberg's quadrilateral conjecture}
\author{Viktor V\'igh} 
\address{Department of Geometry, Bolyai Institute, 
University of Szeged, Aradi v\'ertan\'uk tere 1, 6720 Szeged, Hungary}
\email{vigvik@math.u-szeged.hu}
\date{3 October 2026}
\keywords{Approximation, circumscribed polygon, minimum-area quadrangle,
planar convex body}

\begin{document}
\maketitle

\begin{abstract}
Kuperberg conjectured in 1983 that every convex body $K$ in the plane is contained in a quadrilateral of area at most
$\frac{3}{\sqrt5}\,|K|$, the extremal bodies being the affine-regular pentagons. On the basis of numerical experiments we
propose a stronger conjecture of a purely polygonal nature: every convex polygon $P$ is contained in a quadrilateral of
area at most $\frac{3}{\sqrt5}\,|P|$ whose sides are parallel to sides or diagonals of $P$. This \emph{chord conjecture}
implies the inequality conjectured by Kuperberg. For pentagons it is a theorem of Hong, Ismailescu, Kwak and Park, of
which we give a short proof by area identities. We also show that the classical bound $\sqrt2$ remains valid for
quadrilaterals with sides parallel to chords; the proof is similar to Ismailescu's proof of this bound, but it starts
from an inscribed quadrilateral of maximal area instead of a circumscribed quadrilateral of minimal area. We close with a discussion of
the difficulties in reaching the constant $\frac{3}{\sqrt5}$.
\end{abstract}

\section{Introduction}\label{sec:intro}

Throughout, a \emph{convex body} is a compact convex set with non-empty interior in the Euclidean plane, $|\cdot|$
denotes area, and all polygons are convex. For $n\ge3$ let $C_n(K)$ be a polygon with at most $n$ sides, of minimal
area, containing the convex body $K$, and put
\[
c_n=\sup_K\frac{|C_n(K)|}{|K|} ,
\]
the supremum being taken over all convex bodies. Thus $c_n$ is the smallest constant such that every convex body $K$
is contained in a polygon with at most $n$ sides of area at most $c_n|K|$. Note that the ratio $|C_n(K)|/|K|$ is invariant
under affine transformations.

The corresponding problem for inscribed polygons is completely solved: by a theorem of Sas~\cite{Sas}, every convex
body $K$ contains an $n$-gon of area at least $\frac{n}{2\pi}\sin\frac{2\pi}{n}\,|K|$, with equality only for
ellipses. About circumscribed polygons much less is known; for the following facts we refer to the introduction of
the recent paper of Fodor and Grundbacher~\cite{FG}. For triangles, $c_3=2$, and the extremal bodies are the
parallelograms (Gross~\cite{Gross}, Eggleston~\cite{Egg}, Chakerian~\cite{Ch}). For $n\ge12$ the best known bounds
are $\frac n\pi\tan\frac\pi n\le c_n\le\frac{n-2}\pi\tan\frac\pi{n-2}$, due to L.~Fejes T\'oth~\cite{FT}; the lower
bound is the value of the ratio for ellipses. For $5\le n\le11$ the best known upper bound is
$c_n\le\sec\frac\pi n$, due to Ismailescu~\cite{Ism}.

This note is about quadrilaterals, and we write $q(K)=|C_4(K)|$. Chakerian and Lange~\cite{CL} proved that
$c_4\le\sqrt2$ (see also~\cite{Ch}); Kuperberg~\cite{Kup} showed that $c_4<\sqrt2$, without an explicit bound, and
made the following conjecture.

\begin{conjecture}[Kuperberg~\cite{Kup}]\label{conj:K}
Every convex body $K$ satisfies $q(K)\le\frac{3}{\sqrt5}\,|K|$, and equality holds only for affine-regular pentagons.
\end{conjecture}

Here $\cst:=\frac3{\sqrt5}=1.3416\ldots$ is the value of $q(P)/|P|$ for the regular pentagon $P$; one of its minimal
quadrilaterals is shown in Figure~\ref{fig:intro}(a). So, in contrast to the inscribed problem, the extremal body is
expected to be a polygon: for an ellipse $q(K)=\frac4\pi|K|\approx1.273\,|K|$, and for centrally symmetric bodies
$q(K)\le\frac43|K|$ by a theorem of Petty~\cite{Petty} on circumscribed parallelograms. The conjecture is known for
pentagons: Hong, Ismailescu, Kwak and Park~\cite{HIKP} proved that every pentagon $P$ is contained in a quadrilateral
of area at most $\frac3{\sqrt5}|P|$, with equality only for affine-regular pentagons; in particular
$c_4\ge\frac3{\sqrt5}$. For general convex bodies no explicit bound below $\sqrt2$ was known until recently, when
Fodor and Grundbacher~\cite{FG} proved
\[
q(K)<(1-2.6\cdot10^{-7})\,\sqrt2\,|K|
\]
by means of stability versions of the steps of Ismailescu's proof~\cite{Ism} of the bound $\sqrt2$.

\begin{figure}[t]
\centering
\begin{tikzpicture}[scale=1.550, line join=round, line cap=round, baseline=(current bounding box.center)]
  \fill[accB!14] (0.000,1.000)--(-1.176,1.000)--(-0.951,0.309)--cycle;
  \fill[accB!14] (0.000,1.000)--(1.176,1.000)--(0.951,0.309)--cycle;
  \draw[body] (0.000,1.000)--(-0.951,0.309)--(-0.588,-0.809)--(0.588,-0.809)--(0.951,0.309)--cycle;
  \draw[chordline] (-0.951,0.309)--(0.951,0.309);
  \draw[circB] (-1.176,1.000)--(-0.588,-0.809)--(0.588,-0.809)--(1.176,1.000)--cycle;
  \node[pt] at (0.000,1.000) {};
  \node[pt] at (-0.951,0.309) {};
  \node[pt] at (-0.588,-0.809) {};
  \node[pt] at (0.588,-0.809) {};
  \node[pt] at (0.951,0.309) {};
  \node[lab] at (0.000,1.160) {$A_1$};
  \node[lab] at (-1.161,0.309) {$A_2$};
  \node[lab] at (-0.758,-0.929) {$A_3$};
  \node[lab] at (0.758,-0.929) {$A_4$};
  \node[lab] at (1.161,0.309) {$A_5$};
  \node[lab] at (0.000,-0.220) {$P$};
  \node[lab, accB!75!black] at (1.000,-0.420) {$T$};
  \node[lab] at (0.000,-1.220) {(a)};
\end{tikzpicture}
\hspace{1.5cm}
\begin{tikzpicture}[scale=1.330, line join=round, line cap=round, baseline=(current bounding box.center)]
  \fill[accB!14] (-0.990,0.840)--(-1.333,0.813)--(-0.840,-0.350)--cycle;
  \fill[accB!14] (-0.840,-0.350)--(-0.364,-1.473)--(-0.100,-1.260)--cycle;
  \fill[accB!14] (0.940,-0.420)--(1.423,-0.030)--(0.980,0.270)--cycle;
  \draw[body] (0.020,0.920)--(-0.990,0.840)--(-0.840,-0.350)--(-0.100,-1.260)--(0.940,-0.420)--(0.980,0.270)--cycle;
  \draw[chordline] (-0.990,0.840)--(-0.100,-1.260);
  \draw[circB] (0.020,0.920)--(-1.333,0.813)--(-0.364,-1.473)--(1.423,-0.030)--cycle;
  \node[pt] at (0.020,0.920) {};
  \node[pt] at (-0.990,0.840) {};
  \node[pt] at (-0.840,-0.350) {};
  \node[pt] at (-0.100,-1.260) {};
  \node[pt] at (0.940,-0.420) {};
  \node[pt] at (0.980,0.270) {};
  \node[lab] at (0.070,1.080) {$B_1$};
  \node[lab] at (-0.960,1.030) {$B_2$};
  \node[lab] at (-1.110,-0.350) {$B_3$};
  \node[lab] at (0.060,-1.430) {$B_4$};
  \node[lab] at (1.070,-0.600) {$B_5$};
  \node[lab] at (1.200,0.370) {$B_6$};
  \node[lab] at (0.200,-0.050) {$P$};
  \node[lab, accB!75!black] at (-0.980,-0.800) {$Q$};
  \node[lab] at (0.000,-1.780) {(b)};
\end{tikzpicture}
\caption{(a) The regular pentagon $P$ and one of its minimal quadrilaterals $T$: three sides of $T$ contain sides of
$P$, and the fourth passes through $A_1$ and is parallel to the diagonal $A_2A_5$ (dashed); $|T|=\frac3{\sqrt5}|P|$.
(b) A hexagon $P$ and its chord quadrilateral $Q$ of least area, $|Q|=1.1947\ldots|P|$: three sides of $Q$ contain
the sides $B_6B_1$, $B_1B_2$, $B_4B_5$, and the fourth passes through $B_3$ and is parallel to the diagonal $B_2B_4$.}
\label{fig:intro}
\end{figure}

In this note we propose a strengthening of Conjecture~\ref{conj:K} in which only polygons occur and the sides of the
quadrilateral are restricted to finitely many directions, those of the sides and diagonals of the polygon
(Section~\ref{sec:chord}). We show that it implies the inequality of Conjecture~\ref{conj:K}, observe that for
pentagons it is what was proved in~\cite{HIKP}, and give a short proof of that theorem (Section~\ref{sec:pentagon}).
In Section~\ref{sec:sqrt2} we prove that the bound $\sqrt2$ holds in the restricted setting as well. The last section
summarises some difficulties that a proof of the chord conjecture has to overcome.

\section{The chord conjecture}\label{sec:chord}

\begin{definition}\label{def:chord}
Let $P$ be a polygon. A \emph{chord} of $P$ is a segment joining two vertices of $P$; so the chords are the sides and
the diagonals. A \emph{chord quadrilateral} of $P$ is a polygon with at most four sides that contains $P$ and each of
whose sides is parallel to a chord of $P$. We denote by $\qch(P)$ the least area of a chord quadrilateral of $P$.
\end{definition}

An $n$-gon has $\binom n2$ chords. Translating a side of a chord quadrilateral towards $P$ until it touches $P$ does not
increase the area, so the minimum is attained by one of finitely many quadrilaterals, whose sides lie on support lines
of $P$ parallel to chords. Each of these candidates is determined by the vertices of $P$, and its area is a rational
function of their coordinates. Clearly $q(P)\le\qch(P)$, the ratio $\qch(P)/|P|$ is invariant under affine
transformations, and $\qch(P)=|P|$ if $P$ has at most four vertices. Figure~\ref{fig:intro} shows two examples.
The inequality $q(P)\le\qch(P)$ can be strict; an example is given in Remark~\ref{rem:compare}.

\begin{conjecture}[chord conjecture]\label{conj:chord}
Every polygon $P$ satisfies $\qch(P)\le\frac3{\sqrt5}\,|P|$, and equality holds only for affine-regular pentagons.
\end{conjecture}

The following proposition is almost obvious, but we include it for the sake of completeness.

\begin{proposition}\label{prop:implies}
Let $c\ge1$. If $\qch(P)\le c\,|P|$ for every polygon $P$, then $q(K)\le c\,|K|$ for every convex body $K$. In
particular, Conjecture~\ref{conj:chord} implies the inequality of Conjecture~\ref{conj:K}.
\end{proposition}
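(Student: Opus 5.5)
The plan is to approximate $K$ by polygons and pass to the limit using compactness of the family of circumscribed quadrilaterals. Fix a convex body $K$ and choose a sequence of convex polygons $P_m\supseteq K$ with $P_m\to K$ in the Hausdorff metric, so that $|P_m|\to|K|$. For instance, take the intersection of the supporting half-planes of $K$ with outer normals $e^{2\pi i j/m}$, $j=1,\dots,m$; this is a polygon containing $K$, and its Hausdorff distance to $K$ tends to $0$. By hypothesis and $q\le\qch$, each $P_m$, hence $K$, lies in a polygon $Q_m$ with at most four sides and $|Q_m|\le c|P_m|$. Consequently $q(K)\le|Q_m|\le c|P_m|\to c|K|$, which gives $q(K)\le c|K|$.

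This already suffices: $q(K)$ is defined as the area of a minimal quadrilateral containing $K$, and each $Q_m$ is a competitor. The only point needing care is that $q(K)$ is attained, i.e.\ that $C_4(K)$ exists. The paper takes this for granted in the definition. If I wanted to avoid it, I would read $q(K)$ as an infimum, and then the inequality $q(K)\le c|P_m|$ for all $m$ gives the claim directly, with no compactness argument at all.

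Choosing circumscribed rather than inscribed approximants is the key choice. If one instead used inscribed polygons $P_m\subseteq K$, the quadrilaterals $Q_m\supseteq P_m$ would not contain $K$. Then one would need a limiting argument: the $Q_m$ have bounded area and contain a fixed inner polygon, so by Blaschke selection a subsequence converges to a polygon with at most four sides containing $K$. That route is messier, because one must rule out degeneration and check that area is continuous. Using outer approximants sidesteps all of this, so the main obstacle reduces to verifying that $|P_m|\to|K|$.

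To check $|P_m|\to|K|$, I would use the fact that for angularly equally spaced normals with step $2\pi/m$, every point of $P_m$ lies within distance $\operatorname{diam}(K)\cdot O(1/m)$ of $K$. Hence $P_m\subseteq K+\varepsilon_m B$ with $\varepsilon_m\to0$, and so $|P_m|\le|K|+\varepsilon_m\,\mathrm{per}(K)+\pi\varepsilon_m^2\to|K|$. The final sentence of the proposition then follows by taking $c=\frac3{\sqrt5}$.
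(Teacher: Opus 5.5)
Your proof is correct, and it takes a genuinely different route from the paper's.

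The paper approximates $K$ from inside, by polygons $P_1\subset P_2\subset\cdots$ with vertices on $\partial K$. This gives $|Q_k|\le c|P_k|\le c|K|$ immediately, but the quadrilaterals $Q_k$ need not contain $K$, so a compactness argument is needed. Each $Q_k$ contains a fixed disc $B\subset P_1$ and has area at most $c|K|$, so all $Q_k$ lie in one fixed disc. A subsequence of the vertices then converges, and the limit quadrilateral contains every $P_j$, hence $K$, and has area at most $c|K|$.

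You approximate from outside instead, so every $Q_m$ already contains $K$. Then $q(K)\le|Q_m|\le c|P_m|\to c|K|$ ends the proof with no limit of quadrilaterals. The price is the estimate $|P_m|\to|K|$, which is easy. If $x\in P_m\setminus K$ has nearest point $y\in K$ with outer normal $u$, take a prescribed normal $u_j$ within angle $\pi/m$ of $u$. Then $|x-y|\cos(\pi/m)\le\langle x-y,u_j\rangle\le h_K(u_j)-\langle y,u_j\rangle\le\sin(\pi/m)\operatorname{diam}K$, and the Steiner formula finishes.

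What the paper's route buys is an explicit quadrilateral $Q\supseteq K$ with $|Q|\le c|K|$, which does not depend on the existence of a minimal $C_4(K)$. Your route bounds the infimum of the areas of circumscribed quadrilaterals. That is equivalent here, because the paper's definition of $q(K)$ already presupposes that the minimum is attained, a standard compactness fact. In effect, you move the compactness step from the sequence $Q_k$ onto the existence of $C_4(K)$. Your remark that the inner-approximation route needs a limiting argument describes exactly the work the paper's proof does.
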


\begin{proof}
Let $K$ be a convex body, and let $P_1\subset P_2\subset\cdots$ be polygons with vertices on the boundary of $K$ whose
union is dense in $K$. Since $q(P_k)\le\qch(P_k)\le c\,|P_k|$, there are polygons $Q_k\supset P_k$ with at most four
sides and $|Q_k|\le c\,|P_k|\le c\,|K|$. Let $B\subset P_1$ be a disc with centre $o$ and radius $r>0$. If $x\in Q_k$,
then $Q_k$ contains the convex hull of $B$ and $x$, whose area is at least $r\,|x-o|$; hence $|x-o|\le c\,|K|/r$, and
the polygons $Q_k$ lie in a fixed disc. Passing to a subsequence we may assume that the (at most four) vertices of
$Q_k$ converge. Then $Q_k$ converges in the Hausdorff metric to the convex hull $Q$ of the limit points, which is a
polygon with at most four sides. For every $j$ we have $P_j\subset Q_k$ for all $k\ge j$, hence $P_j\subset Q$;
therefore $K\subset Q$. Finally $|Q|=\lim|Q_k|\le c\,|K|$ by the continuity of the area.
\end{proof}

The proof shows the well-known fact that it suffices to prove the inequality of Conjecture~\ref{conj:K} for polygons,
and it uses the chord condition only through $q\le\qch$. The characterisation of the equality case in
Conjecture~\ref{conj:K} does not follow from Conjecture~\ref{conj:chord} in this way.

Although Conjecture~\ref{conj:chord} is stronger than the inequality of Conjecture~\ref{conj:K}
(Proposition~\ref{prop:implies}), it may be more accessible. The sides of the candidates have prescribed
directions, so for an $n$-gon the conjecture is a finite system of inequalities between rational functions of the
coordinates of the vertices, of the kind that has been handled for $n=5$ (Section~\ref{sec:pentagon}). For $q(P)$
there are also finitely many candidates, by DePano's theorem (see~\cite[Lemma~2.1]{HIKP}), but one side of a candidate
may be determined by a midpoint condition instead of a direction. Chord quadrilaterals
are inherited from sub-polygons: if $P'$ is the convex hull of some of the vertices of $P$, then every chord
quadrilateral of $P'$ that contains $P$ is a chord quadrilateral of $P$. The extremal quadrilaterals of the regular
pentagon are chord quadrilaterals, and so are the quadrilaterals used in the proofs for pentagons.

\paragraph{Numerical experiments.}
For a given polygon, $\qch(P)$ is computed by enumerating all candidates. We did this for $16\,830$ random
polygons with $5\le n\le10$ vertices, drawn from three random models (Table~\ref{tab:numerics}). Random polygons are
far from extremal, so we also maximised $\qch(P)/|P|$ numerically over the $n$-gons for $n=6,7,8$, by a trust-region
bundle method for minima of smooth functions. The $50$ runs were started from random points on an ellipse, from
perturbed affine-regular $n$-gons, and from perturbed affine-regular pentagons with $n-5$ additional vertices; a lower
bound on the areas of the triangles spanned by three consecutive vertices kept the polygons genuine $n$-gons. Every
run ended either at an affine-regular $n$-gon ($n=6,7$), which appears to be a local maximum, with
$\qch(P)/|P|=\frac43$ and $1.2588\ldots$, respectively, or at a nearly degenerate $n$-gon, in which some triangle
spanned by three consecutive vertices has area at most $10^{-3}|P|$: close to an affine-regular pentagon, with a value
slightly below $\cst$ (the largest was $\cst-2.3\cdot10^{-7}$), or, in three runs, close to an affine-regular hexagon,
with a value slightly below $\frac43$. Since $q\le\qch$ and $q$ is continuous, the supremum of $\qch(P)/|P|$ over the
$n$-gons is at least $\cst$ for every $n\ge5$; the experiments suggest that it equals $\cst$ and is not attained for
$n\ge6$.

\begin{table}[t]
\centering\small
\begin{tabular}{@{}cccccccc@{}}
\toprule
 & \multicolumn{2}{c}{random polygons} & & \multicolumn{4}{c}{ascent runs, by the polygon at which they stopped}\\
\cmidrule(lr){2-3}\cmidrule(l){5-8}
$n$ & number & $\max\qch(P)/|P|$ & & number & affine-regular $n$-gon & near a pentagon & near a hexagon\\
\midrule
$5$ & $6000$ & $1.3247$ & & & & & \\
$6$ & $6000$ & $1.3183$ & & $30$ & $11$ & $19$ & \\
$7$ & $3000$ & $1.3163$ & & $12$ & $3$ & $7$ & $2$\\
$8$ & $1200$ & $1.2678$ & & $8$ & $0$ & $7$ & $1$\\
$9$ & $450$ & $1.2517$ & & & & & \\
$10$ & $180$ & $1.2619$ & & & & & \\
\bottomrule
\end{tabular}
\caption{Numerical experiments supporting Conjecture~\ref{conj:chord}. Left: random polygons. Right: numerical maximisation of
$\qch(P)/|P|$ over $n$-gons. A run ends at an affine-regular $n$-gon, or at a nearly degenerate $n$-gon close to an
affine-regular pentagon (values slightly below $\cst=1.34164\ldots$) or to an affine-regular hexagon (values
slightly below $\frac43$). No polygon with $\qch(P)>\cst|P|$ was found.}
\label{tab:numerics}
\end{table}

\section{Pentagons}\label{sec:pentagon}

Let $P=A_1A_2A_3A_4A_5$ be a pentagon with vertices in counterclockwise order; indices are taken modulo $5$. The
\emph{ear} at $A_i$ is the triangle $A_{i-1}A_iA_{i+1}$, and we put
\[
a_i=|A_{i-1}A_iA_{i+1}|,\qquad S=|P|,\qquad t_3=|A_2A_3A_5|,\qquad t_4=|A_2A_4A_5| .
\]
Let $\ell$ be the line through $A_1$ parallel to the diagonal $A_2A_5$, and let $T=T(P;A_1)$ be the quadrilateral
bounded by $\ell$ and the lines $A_2A_3$, $A_3A_4$, $A_4A_5$, with vertices $A_3$, $A_4$, $Y$ and $X$ (Figure~\ref{fig:pentagon}(a)). Recall that
$\cst=\frac3{\sqrt5}$, and put
\[
\kappa=\cst-1,\qquad\vp=\frac{1+\sqrt5}2 .
\]

\begin{theorem}[Hong, Ismailescu, Kwak and Park~\cite{HIKP}]\label{thm:pentagon}
Let $P=A_1A_2A_3A_4A_5$ be a pentagon labelled so that its ear at $A_1$ is a smallest one: $a_1\le a_i$ for all $i$.
Then $P\subset T$ and
\[
|T|\le\cst\,|P| ,
\]
with equality if and only if $P$ is affine-regular.
\end{theorem}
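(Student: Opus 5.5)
The plan is to express $|T|$ through the ear areas $a_i$ and $S$, and then reduce the theorem to an algebraic inequality among these quantities using Gauss's pentagon identity.

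\emph{Area of $T$.} Put $d=|A_2A_5|$. The line $\ell$ is a support line of $P$, because $A_1$ is the only vertex on its side of $A_2A_5$. Apart from $P$, the quadrilateral $T$ consists of the triangles $A_1XA_2$ and $A_1YA_5$. The point $X$ lies on the line $A_2A_3$ beyond $A_2$. The distances of $A_1$ and $A_3$ from the line $A_2A_5$ are $2a_1/d$ and $2t_3/d$. Hence
\[
\frac{|XA_2|}{|A_2A_3|}=\frac{a_1}{t_3},\qquad |A_1XA_2|=\frac{a_1}{t_3}\,|A_1A_2A_3|=\frac{a_1a_2}{t_3},
\]
and symmetrically $|A_1YA_5|=a_1a_5/t_4$. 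Moreover $t_3=S-a_1-a_4$ and $t_4=S-a_1-a_3$, since $P$ splits into $A_1A_2A_5$, $A_2A_3A_5$, $A_3A_4A_5$. Therefore
\[
|T|=S+\frac{a_1a_2}{S-a_1-a_4}+\frac{a_1a_5}{S-a_1-a_3}.
\]

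\emph{Containment.} For $P\subset T$ with $T$ a genuine quadrilateral, I must check that the lines $A_2A_3$ and $A_4A_5$ meet on the far side of $\ell$ (or are parallel, or diverge). This requires $X$ and $Y$ to lie on $\ell$ on opposite sides of $A_1$ and between the two lines. I expect this to reduce to a comparison of heights over $A_2A_5$ that follows from $a_1\le a_3$ and $a_1\le a_4$. For example, the intersection point of $A_2A_3$ and $A_4A_5$ lies at distance proportional to a ratio of $t_3,t_4$ and $a_3,a_4$ from the line $A_2A_5$, and $a_1\le a_3,a_4$ should place it beyond $\ell$. I will verify this with the same similar-triangle ratios.

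\emph{The inequality.} Normalise $S=1$. By Gauss's pentagon identity,
\[
1-\textstyle\sum a_i+\sum a_ia_{i+1}=0,
\]
and the claim becomes
\[
F:=\frac{a_1a_2}{1-a_1-a_4}+\frac{a_1a_5}{1-a_1-a_3}\le\kappa
\]
whenever $a_1=\min_i a_i$. In the affine-regular pentagon all $a_i$ equal $a=1/(\vp^2\sqrt5)$, i.e.\ $a=(5-\sqrt5)/10$, the root of $1-5a+5a^2=0$, and one checks $F=\kappa$ there. My first attempt is to clear denominators and write $\kappa(1-a_1-a_4)(1-a_1-a_3)-a_1a_2(1-a_1-a_3)-a_1a_5(1-a_1-a_4)$. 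I then add a suitable multiple of Gauss's expression (which vanishes) so that the result becomes a sum of terms, each visibly nonnegative under $a_1\le a_i$. Such terms would be products of factors $(a_i-a_1)$ with positive quantities, plus a square vanishing at the regular point. The natural multiplier is a linear form in the $a_i$, chosen so that the gradient vanishes at the regular point. Equality then forces $a_1=a_2=\dots=a_5$. A pentagon with all ears equal is affine-regular: the ear areas together with $S$ determine the pentagon up to affinity, and the regular one realises them.

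\emph{Main obstacle.} The main obstacle is this final certificate: finding the correct combination with Gauss's identity and organising the remainder into manifestly nonnegative pieces. If a direct sum-of-squares decomposition resists, I would instead use $a_2,a_5\ge a_1$ monotonicity. I would then bound $F$ by fixing $a_1$ and maximising over the remaining variables on the Gauss surface. By the symmetry $2\leftrightarrow5$, $3\leftrightarrow4$, this reduces to a two-variable problem that can be handled by elementary calculus.
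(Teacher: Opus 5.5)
Your computation of $|T|$ is correct (it is Lemma~\ref{lem:id}(a)), and Gauss's identity is the right relation to use (Lemma~\ref{lem:id}(b) is that identity in variables adapted to $A_1$). But the proposal stops where the proof has to begin. The inequality $F\le\kappa$ under $a_1=\min_i a_i$ is never established; you only describe a certificate you hope exists. That is essentially the route of the original proof of Hong, Ismailescu, Kwak and Park, who worked in affine coordinates and found such a decomposition with computer algebra. Your sketch gives no indication how to find one.

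The criterion you propose for the multiplier also cannot be met. The regular pentagon is a corner of the feasible set, with all constraints $a_i\ge a_1$ active, and it is not a critical point of $F$ on the Gauss surface. So no multiple of Gauss's expression makes the gradient vanish there, and the constraint terms must carry nonzero weight. The fallback fails as stated too. The symmetry $2\leftrightarrow5$, $3\leftrightarrow4$ does not force a maximiser to be symmetric. Nor does it lower the dimension: with $S=1$ and $a_1$ fixed there are still three free variables on the Gauss surface.

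The missing idea is that Gauss's identity is linear in $a_2,a_5$ and has exactly the shape of the numerator of $F$:
\[
a_2t_4+a_5t_3=(a_1+t_3)(a_1+t_4)-a_1S .
\]
So the multiplier you are looking for is just $a_1$. Subtracting $a_1$ times Gauss's expression from your cleared numerator removes $a_2$ and $a_5$ altogether, and $|T|-S=a_1\bigl((a_1+t_3)(a_1+t_4)-a_1S\bigr)/(t_3t_4)$ depends only on $a_1,t_3,t_4,S$.

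Normalise $a_1=1$ rather than $S=1$. The hypothesis then splits into $S\ge2+\max(t_3,t_4)$, from $a_3=S-1-t_4\ge1$ and $a_4=S-1-t_3\ge1$, and $S\le1+t_3t_4$, from $a_2,a_5\ge1$ via the same identity. The claim becomes $S(1+\kappa t_3t_4)\ge(1+t_3)(1+t_4)$, which is elementary:
\begin{enumerate}
\item If $t_3\le t_4$, the two bounds force $t_4\ge\vp$.
\item The left side increases with $S$, so one may take $S=2+t_4$. This leaves $1+t_3D(t_4)$ with $D(t_4)=\kappa t_4(t_4+2)-(t_4+1)$, which is linear in $t_3$.
\item If $D(t_4)<0$, the worst case is $t_3=t_4$. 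It gives a cubic equal to $\kappa(t_4-\vp)$ times a quadratic that is positive for $t_4\ge\vp$.
\end{enumerate}
The equality case ($t_3=t_4=\vp$, $S=2+\vp$, and then all ears equal) comes out of the same chain.

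Two smaller points. First, $P\subset T$ needs no hypothesis. By convexity $A_1$ lies in the region bounded by the lines $A_2A_3$, $A_3A_4$, $A_4A_5$, so if the lines $A_2A_3$ and $A_4A_5$ meet on that side at all, they meet beyond $\ell$. Second, $1/(\vp^2\sqrt5)\neq(5-\sqrt5)/10$; the correct ear value is $1/(\vp\sqrt5)$.
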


In~\cite{HIKP} this is Theorem~3.1 together with Observation~3.2; there the pentagon is $ABCDE$, labelled so that
$DEA$ is a smallest ear, and $T$ is the quadrilateral $BCFG$. Three sides of $T$ contain sides of $P$ and the fourth
is parallel to a diagonal, so $T$ is a chord quadrilateral of $P$. Since $q(P)=\cst|P|$ for the regular
pentagon~\cite{Kup,HIKP}, we obtain:

\begin{corollary}\label{cor:pentagon}
The chord conjecture holds for pentagons: $\qch(P)\le\cst\,|P|$ for every pentagon $P$, with equality if and only if
$P$ is affine-regular.
\end{corollary}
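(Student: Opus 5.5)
Corollary~\ref{cor:pentagon} follows from Theorem~\ref{thm:pentagon} together with the known value of $q$ for the regular pentagon. The plan is to prove the inequality first and then settle the equality case in both directions.

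\emph{The inequality.} Given a pentagon $P$, relabel its vertices cyclically, keeping counterclockwise order, so that the ear at $A_1$ is a smallest one. This is always possible since there are only five ears. Theorem~\ref{thm:pentagon} then gives $P\subset T$ and $|T|\le\cst|P|$. Next check that $T$ is a chord quadrilateral in the sense of Definition~\ref{def:chord}. Three of its sides lie on the lines $A_2A_3$, $A_3A_4$, $A_4A_5$, which contain sides of $P$. The fourth lies on $\ell$, which is parallel to the diagonal $A_2A_5$. Hence $\qch(P)\le|T|\le\cst|P|$.

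\emph{Equality forces affine-regularity.} Suppose $\qch(P)=\cst|P|$. Then $\cst|P|=\qch(P)\le|T|\le\cst|P|$, so $|T|=\cst|P|$. The equality clause of Theorem~\ref{thm:pentagon} then says $P$ is affine-regular.

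\emph{Affine-regular pentagons attain equality.} This is the one step needing outside input. Both $\qch(P)/|P|$ and $q(P)/|P|$ are affine invariant, so it suffices to treat the regular pentagon. For the upper bound, the first part already gives $\qch(P)\le\cst|P|$. For the lower bound, use $\qch(P)\ge q(P)$ together with the known value $q(P)=\cst|P|$ for the regular pentagon from \cite{Kup,HIKP}.

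If one wanted to avoid citing that value, the lower bound $q(P)\ge\cst|P|$ for the regular pentagon would be the main obstacle. It would require a minimality analysis of circumscribed quadrilaterals, for example via DePano-type candidates as in \cite[Lemma~2.1]{HIKP}. Here it is simply taken from the literature, as the text preceding the corollary indicates.
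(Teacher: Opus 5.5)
Your proposal is correct and matches the paper's derivation. It relabels so the ear at $A_1$ is smallest, applies Theorem~\ref{thm:pentagon}, and notes that $T$ is a chord quadrilateral, which gives $\qch(P)\le|T|\le\cst|P|$. The equality case then comes from the theorem's equality clause in one direction, and from $q\le\qch$ plus the cited value $q(P)=\cst|P|$ for the regular pentagon in the other.
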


In~\cite{HIKP} the theorem is proved in affine coordinates: the inequality becomes a polynomial inequality in four
variables, which is certified by a decomposition into non-negative terms found with the help of computer algebra. We
give a simplified proof using two area identities, which might be easier to check by hand. First we prove a lemma.

\begin{figure}[t]
\centering
\begin{tikzpicture}[scale=1.360, line join=round, line cap=round, baseline=(current bounding box.center)]
  \fill[accB!16] (0.000,1.000)--(-1.200,1.000)--(-1.000,0.000)--cycle;
  \fill[accB!16] (0.000,1.000)--(1.100,1.000)--(1.000,0.000)--cycle;
  \draw[thinline] (-1.600,1.000)--(1.500,1.000);
  \node[lab] at (1.630,1.000) {$\ell$};
  \draw[body] (0.000,1.000)--(-1.000,0.000)--(-0.640,-1.800)--(0.800,-2.000)--(1.000,0.000)--cycle;
  \draw[chordline] (-1.000,0.000)--(1.000,0.000);
  \draw[circB] (-1.200,1.000)--(-0.640,-1.800)--(0.800,-2.000)--(1.100,1.000)--cycle;
  \node[pt] at (0.000,1.000) {};
  \node[pt] at (-1.000,0.000) {};
  \node[pt] at (-0.640,-1.800) {};
  \node[pt] at (0.800,-2.000) {};
  \node[pt] at (1.000,0.000) {};
  \node[lab] at (0.000,1.180) {$A_1$};
  \node[lab] at (-0.830,-1.940) {$A_3$};
  \node[lab] at (0.990,-2.140) {$A_4$};
  \node[lab] at (-1.240,-0.040) {$A_2$};
  \node[lab] at (1.240,-0.040) {$A_5$};
  \node[pt] at (-1.200,1.000) {};
  \node[pt] at (1.100,1.000) {};
  \node[lab] at (-1.340,1.170) {$X$};
  \node[lab] at (1.240,1.170) {$Y$};
  \node[lab] at (0.050,-0.950) {$P$};
  \node[lab, accB!75!black] at (-1.120,-1.050) {$T$};
  \node[lab, font=\scriptsize] at (-0.730,0.690) {$\tfrac{a_1a_2}{t_3}$};
  \node[lab, font=\scriptsize] at (0.700,0.690) {$\tfrac{a_1a_5}{t_4}$};
  \node[lab] at (0.000,-2.470) {(a)};
\end{tikzpicture}
\hfill
\begin{tikzpicture}[scale=1.360, line join=round, line cap=round, baseline=(current bounding box.center)]
  \fill[accA!30] (1.000,0.000)--(0.000,1.000)--(-1.000,0.000)--cycle;
  \fill[black!9] (-1.000,0.000)--(-0.640,-1.800)--(1.000,0.000)--cycle;
  \fill[accB!20] (-0.640,-1.800)--(0.800,-2.000)--(1.000,0.000)--cycle;
  \draw[thinline] (-1.000,0.000)--(1.000,0.000);
  \draw[thinline] (-0.640,-1.800)--(1.000,0.000);
  \draw[body, fill=none] (0.000,1.000)--(-1.000,0.000)--(-0.640,-1.800)--(0.800,-2.000)--(1.000,0.000)--cycle;
  \node[pt] at (0.000,1.000) {};
  \node[pt] at (-1.000,0.000) {};
  \node[pt] at (-0.640,-1.800) {};
  \node[pt] at (0.800,-2.000) {};
  \node[pt] at (1.000,0.000) {};
  \node[lab] at (0.000,1.180) {$A_1$};
  \node[lab] at (-0.830,-1.940) {$A_3$};
  \node[lab] at (0.990,-2.140) {$A_4$};
  \node[lab] at (-1.240,-0.040) {$A_2$};
  \node[lab] at (1.240,-0.040) {$A_5$};
  \node[lab] at (0.000,0.333) {$a_1$};
  \node[lab] at (-0.233,-0.600) {$t_3$};
  \node[lab] at (0.417,-1.267) {$a_4$};
  \node[lab] at (0.000,-2.470) {(b)};
\end{tikzpicture}
\hfill
\begin{tikzpicture}[scale=1.360, line join=round, line cap=round, baseline=(current bounding box.center)]
  \fill[accB!16] (0.000,1.000)--(-1.200,1.000)--(-1.000,0.000)--cycle;
  \fill[accA!30] (0.000,1.000)--(-1.000,0.000)--(-0.640,-1.800)--cycle;
  \draw[thinline] (-1.600,1.000)--(1.720,1.000);
  \node[lab] at (1.850,1.000) {$\ell$};
  \draw[chordline] (-1.600,0.000)--(1.720,0.000);
  \draw[thinline, densely dotted] (-1.600,-1.800)--(1.720,-1.800);
  \draw[draw=black!55, line width=0.5pt] (0.000,1.000)--(-1.000,0.000)--(-0.640,-1.800)--(0.800,-2.000)--(1.000,0.000)--cycle;
  \draw[body, fill=none] (0.000,1.000)--(-1.200,1.000)--(-0.640,-1.800)--cycle;
  \draw[body, fill=none] (0.000,1.000)--(-1.000,0.000);
  \node[pt] at (0.000,1.000) {};
  \node[pt] at (-1.000,0.000) {};
  \node[pt] at (-0.640,-1.800) {};
  \node[pt] at (0.800,-2.000) {};
  \node[pt] at (1.000,0.000) {};
  \node[lab] at (0.000,1.180) {$A_1$};
  \node[lab] at (-0.830,-1.940) {$A_3$};
  \node[lab] at (0.990,-2.140) {$A_4$};
  \node[lab] at (-1.270,-0.140) {$A_2$};
  \node[lab] at (1.170,-0.170) {$A_5$};
  \node[pt] at (-1.200,1.000) {};
  \node[lab] at (-1.340,1.170) {$X$};
  \draw[dimline] (1.500,0.020)--(1.500,0.980);
  \node[lab] at (1.670,0.500) {$h_1$};
  \draw[dimline] (1.500,-0.020)--(1.500,-1.780);
  \node[lab] at (1.670,-0.900) {$h_3$};
  \node[lab, font=\scriptsize] at (-0.780,0.690) {$\tfrac{a_1a_2}{t_3}$};
  \node[lab] at (-0.517,-0.207) {$a_2$};
  \node[lab] at (0.000,-2.470) {(c)};
\end{tikzpicture}
\caption{The proof of Theorem~\ref{thm:pentagon}. (a) A pentagon whose smallest ear is at $A_1$, and the
quadrilateral $T=XA_3A_4Y$; the difference $T\setminus P$ consists of two triangles, of areas $a_1a_2/t_3$ and
$a_1a_5/t_4$. (b) The dissection $S=a_1+t_3+a_4$. (c) Lemma~\ref{lem:id}(a): the triangles
$A_1XA_2$ and $A_1A_2A_3$ have a common apex and collinear bases, and $|XA_2|:|A_2A_3|=h_1:h_3=a_1:t_3$, where $h_1$
and $h_3$ are the distances of $A_1$ and of $A_3$ from the line $A_2A_5$.}
\label{fig:pentagon}
\end{figure}


\begin{lemma}[area identities]\label{lem:id}
With the notation above, the following holds.
\begin{enumerate}[label=\textup{(\alph*)},leftmargin=2em,itemsep=1pt,topsep=3pt]
\item $|A_1XA_2|=a_1a_2/t_3$ and $|A_1YA_5|=a_1a_5/t_4$; hence $|T|=S+\dfrac{a_1a_2}{t_3}+\dfrac{a_1a_5}{t_4}$.
\item $a_1S=(a_1+t_3)(a_1+t_4)-a_2t_4-a_5t_3$.
\end{enumerate}
\end{lemma}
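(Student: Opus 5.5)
For part (a) I would follow the hint in Figure~\ref{fig:pentagon}(c). The point $X$ is the intersection of $\ell$ with the line $A_2A_3$, and $A_2$ lies between $X$ and $A_3$ on that line, with $X$ beyond $A_2$. The reason is that $\ell$ is parallel to $A_2A_5$ and lies on the side of $A_2A_5$ opposite to $A_3$, while $A_1$ and $A_3$ lie on opposite sides of $A_2A_5$.

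Let $h_1$ and $h_3$ denote the distances of $A_1$ and $A_3$ from the line $A_2A_5$. The lines $\ell$ and $A_2A_5$ are parallel, so the intercept theorem applied to the transversal $XA_3$ gives $|XA_2|:|A_2A_3|=h_1:h_3$.

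The triangles $A_1A_2A_5$ and $A_3A_2A_5$ share the base $A_2A_5$. Hence $h_1:h_3=a_1:t_3$, where $a_1=|A_5A_1A_2|$ and $t_3=|A_2A_3A_5|$.

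The triangles $A_1XA_2$ and $A_1A_2A_3$ have the common apex $A_1$ and collinear bases $XA_2$ and $A_2A_3$. Their areas are therefore in the ratio $|XA_2|:|A_2A_3|$. Since $|A_1A_2A_3|=a_2$, this yields $|A_1XA_2|=a_1a_2/t_3$.

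The symmetric argument with $Y$ on the line $A_4A_5$, using $t_4=|A_2A_4A_5|$ and $a_5=|A_4A_5A_1|$, gives $|A_1YA_5|=a_1a_5/t_4$.

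Finally, $T$ is the union of $P$ and these two triangles, with disjoint interiors: they are cut off from $T$ by the sides $A_1A_2$ and $A_5A_1$ of $P$. This gives the formula for $|T|$. I would also record here that $P\subset T$ follows from the same picture.

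For part (b) I would work in affine coordinates so that everything becomes polynomial in a few parameters. Place $A_2=(0,0)$ and $A_5=(1,0)$, so that every area of a triangle with base $A_2A_5$ is half the height of its apex. Put $A_1=(p,h)$ with $h>0$, and $A_3=(x_3,-y_3)$, $A_4=(x_4,-y_4)$ with $y_3,y_4>0$. Then
\[
a_1=\tfrac h2,\qquad t_3=\tfrac{y_3}2,\qquad t_4=\tfrac{y_4}2,\qquad S=a_1+|A_2A_3A_4A_5| .
\]
The areas $a_2$ and $a_5$ and the quadrilateral area are explicit $2\times2$ determinants.

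Instead of brute force, I would look for a structural route first. Using $S=a_1+t_3+a_4$ from Figure~\ref{fig:pentagon}(b), the identity is equivalent to
\[
a_1t_4+t_3t_4 = a_1a_4 + a_2t_4 + a_5t_3 - t_3a_1 .
\]
This rearranges to
\[
t_3t_4 + a_1(t_3+t_4-a_4) = a_2t_4 + a_5t_3 .
\]
Note that $t_3+t_4-a_4=|A_2A_3A_4A_5|-a_4+\dots$ should be expressible via the quadrilateral $A_2A_3A_4A_5$. I would then recognise this as a Ptolemy-type (Plücker) relation among triangle areas of five points. The classical identity
\[
[125][134]=[123][154]+[124][135]\quad\text{(signed, suitably arranged)}
\]
is exactly of this kind.

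So the concrete plan is to write each of $a_1,a_2,a_5,t_3,t_4$ and the remaining relevant triangle areas as signed brackets $[ijk]$. I would then reduce (b) to a sum of three-term Plücker relations $[ijk][ilm]-[ijl][ikm]+[ijm][ikl]=0$ with a common index; for instance $i=2$ or $i=5$ would suit, since $A_2A_5$ is the base.

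The main obstacle is bookkeeping: getting the signs and orientations right so that the Grassmann–Plücker relations combine exactly into (b). If that stalls, I would fall back on the coordinate computation above. There both sides are polynomials of degree two in $(p,h,x_3,y_3,x_4,y_4)$, linear in $h$ and $p$, so comparing coefficients is a short check.
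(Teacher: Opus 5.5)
Your part (a) is the paper's argument: common apex $A_1$, collinear bases, and $|XA_2|:|A_2A_3|=h_1:h_3=a_1:t_3$ because $XA_1\parallel A_2A_5$. Your fallback for (b), direct verification in affine coordinates, is also how the paper proves (b).

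The route you want to try first for (b), however, starts from a mis-derived identity. Substituting $S=a_1+t_3+a_4$ into (b) and cancelling $a_1^2+a_1t_3$ leaves
\[
a_1t_4+t_3t_4=a_1a_4+a_2t_4+a_5t_3,\qquad\text{i.e.}\qquad t_3t_4+a_1(t_4-a_4)=a_2t_4+a_5t_3,
\]
without your extra term $-t_3a_1$. Your version is false. For the affine-regular pentagon with all ears equal to $1$ (so $t_3=t_4=\varphi$), its two sides are $\varphi+\varphi^2=1+2\varphi$ and $1+\varphi$. So the sign bookkeeping you expect to be the main obstacle could never close.

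Moreover, even the corrected identity is not a consequence of three-term Grassmann--Pl\"ucker relations alone. Those relations hold for arbitrary vectors in $\mathbb R^3$ and are homogeneous in each point. Here, however, $t_3t_4=[235][245]$ does not involve $A_1$ while every other term does, so rescaling the homogeneous coordinates of $A_1$ breaks the identity. A bracket proof would also have to use the affine relations $[abc]=[dbc]+[adc]+[abd]$.

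So (b) rests on your fallback, which is sound but only sketched, and you describe it inaccurately. With only $A_2$ and $A_5$ fixed, both sides are cubic in $(p,h,x_3,y_3,x_4,y_4)$ and quadratic in $h$; for instance $a_1S$ contains $\tfrac{h^2}{4}$ and $\tfrac h4(x_4y_3-x_3y_4)$.

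The paper uses the full affine group instead. Both sides of (b) are multiplied by the squared determinant, so one may take $A_1=(0,1)$, $A_2=(-1,0)$, $A_5=(1,0)$. Then $a_1=1$, $A_3=(x_3,-t_3)$, $A_4=(x_4,-t_4)$, and
\[
a_2=\tfrac12(1+x_3+t_3),\qquad a_5=\tfrac12(1-x_4+t_4),\qquad S=1+\tfrac12(t_3+t_4+x_4t_3-x_3t_4).
\]
After this, $(1+t_3)(1+t_4)-a_2t_4-a_5t_3=S$ is a two-line expansion. Drop the Pl\"ucker detour and present this computation.
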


\begin{proof}
(a) The triangles $A_1XA_2$ and $A_1A_2A_3$ have the common apex $A_1$, and their bases $XA_2$ and $A_2A_3$ lie on one
line, so $|A_1XA_2|:a_2=|XA_2|:|A_2A_3|$. Since $X$, $A_2$, $A_3$ are collinear and $A_2$ lies on the line $A_2A_5$,
\[
\frac{|XA_2|}{|A_2A_3|}=\frac{\dist(X,A_2A_5)}{\dist(A_3,A_2A_5)}=\frac{\dist(A_1,A_2A_5)}{\dist(A_3,A_2A_5)}
=\frac{|A_2A_5A_1|}{|A_2A_5A_3|}=\frac{a_1}{t_3} ,
\]
where the second equality holds because $XA_1$ is parallel to $A_2A_5$ (Figure~\ref{fig:pentagon}(c)). The second
formula is the mirror image of the first, and the formula for $|T|$ follows from \eqref{eq:T}.

(b) Both sides are multiplied by the square of the determinant under an affine map, so we may assume $A_1=(0,1)$,
$A_2=(-1,0)$, $A_5=(1,0)$. Then $a_1=1$, $A_3=(x_3,-t_3)$ and $A_4=(x_4,-t_4)$, and a direct computation gives
\[
a_2=\tfrac12(1+x_3+t_3),\qquad a_5=\tfrac12(1-x_4+t_4),\qquad S=1+\tfrac12(t_3+t_4+x_4t_3-x_3t_4) ,
\]
whence $(1+t_3)(1+t_4)-a_2t_4-a_5t_3=S$.
\end{proof}

\begin{proof}[Proof of Theorem~\ref{thm:pentagon}]

We use the notation of Figure~\ref{fig:pentagon}, $T=XA_3A_4Y$ and obviously \begin{equation}\label{eq:T}
    |T|=S+|A_1XA_2|+|A_1YA_5|.
\end{equation}

All quantities are areas, so we may normalize $a_1=1$ by a dilation. By
Lemma~\ref{lem:id}(a) and~(b),
\[
|T|-S=\frac{a_2}{t_3}+\frac{a_5}{t_4}=\frac{a_2t_4+a_5t_3}{t_3t_4}=\frac{(1+t_3)(1+t_4)-S}{t_3t_4} ,
\]
and since $\cst S-|T|=\kappa S-(|T|-S)$,
\begin{equation}\label{eq:reform}
t_3t_4\bigl(\cst S-|T|\bigr)=S\,(1+\kappa\,t_3t_4)-(1+t_3)(1+t_4) .
\end{equation}
The hypothesis on $a_1$ being minimal yields some constraints on $S,t_3,t_4$. Figure~\ref{fig:pentagon}(b) shows that \begin{equation}\label{eq:triv}
    a_3=S-1-t_4 \quad \text{and} \quad a_4=S-1-t_3,
\end{equation}  so
the conditions $a_3\ge1$, $a_4\ge1$ say that
\begin{equation}\label{eq:far}
S\ge2+\max(t_3,t_4) .
\end{equation}
By Lemma~\ref{lem:id}(b), the conditions $a_2\ge1$, $a_5\ge1$ at the two neighbours of $A_1$ give
\begin{equation}\label{eq:near}
S=(1+t_3)(1+t_4)-a_2t_4-a_5t_3\le(1+t_3)(1+t_4)-t_4-t_3=1+t_3t_4 .
\end{equation}
In view of \eqref{eq:reform} $\cst S \ge |T|$ is equivalent to $S\,(1+\kappa\,t_3t_4)\ge (1+t_3)(1+t_4)$, so it remains to prove the following: \emph{if $S,t,t'>0$ satisfy
$2+\max(t,t')\le S\le1+tt'$, then}
\begin{equation}\label{eq:three}
S\,(1+\kappa\,tt')\ge(1+t)(1+t') ,
\end{equation}
\emph{with equality if and only if $t=t'=\vp$ and $S=2+\vp$.}

We may assume $t\le t'$. First, $2+t'\le S\le1+tt'\le1+t'^2$ gives $t'^2-t'-1\ge0$, that is, $t'\ge\vp$. Next, the
left-hand side of \eqref{eq:three} increases with $S$ and $S\ge2+t'$, hence
\begin{equation}\label{eq:D}
S\,(1+\kappa\,tt')-(1+t)(1+t')\ \ge\ (2+t')(1+\kappa\,tt')-(1+t)(1+t').
\end{equation}

With $D(t'):=\kappa t' (t'+2)-(t'+1)$ we obtain $(2+t')(1+\kappa\,tt')-(1+t)(1+t')=1+tD(t').$ If $D(t')\ge 0$ we are done. If $D(t')<0$ then $0\le t\le t'$ implies  \[1+tD(t')\ge 1+t'D(t')=\kappa t'^3+(2\kappa-1)t'^2-t'+1=\kappa(t'-\vp)\left( t'^2+\frac{5-\sqrt 5}{4}t'-\frac{5+\sqrt 5}{4}\right).\]
It is easy to check that for $t'\ge \vp$ the quadratic expression is positive, and obviously $t'-\vp\ge 0$, hence we proved \eqref{eq:three}. Equality can hold only if $t'=\vp$, $t=t'$ and $S=2+t'$. Conversely, for these values
$S=1+\vp^2$, the constraints hold and equality holds in \eqref{eq:three}.


It remains to discuss equality in the theorem. If $|T|=\cst|P|$, then $t_3=t_4=\vp$ and $S=2+\vp$; so $a_3=a_4=1$ by \eqref{eq:triv} , and $\vp\,(a_2+a_5)=(1+\vp)^2-(2+\vp)=2\vp$ by Lemma~\ref{lem:id}~(b), whence $a_2=a_5=1$ because
$a_2,a_5\ge1$. Thus all five ears are equal. A pentagon with equal ears is affine-regular: $a_i=a_{i+1}$ means that
$A_{i-1}A_{i+2}$ is parallel to $A_iA_{i+1}$; in the normalisation $A_1=(0,1)$, $A_2=(-1,0)$, $A_5=(1,0)$ the
parallelisms $A_5A_3\parallel A_1A_2$, $A_2A_4\parallel A_1A_5$ and $A_3A_4\parallel A_2A_5$ give
$A_3=(1-\lambda,-\lambda)$ and $A_4=(\lambda-1,-\lambda)$ with some $\lambda>0$, and $A_1A_4\parallel A_2A_3$ gives
$\lambda^2-\lambda-1=0$, i.e.\ $\lambda=\vp$; so the pentagon is unique up to affine maps, and the regular pentagon
has equal ears. Conversely, for an affine-regular pentagon, normalised so that all ears are $1$,
\eqref{eq:triv} and Lemma~\ref{lem:id}~(b) give $t_3=t_4=S-2$ and $S^2-5S+5=0$, hence $S=2+\vp$ (as $S>2$) and $t_3=t_4=\vp$, and
equality holds in \eqref{eq:three} and in the theorem.
\end{proof}

{\it Remark.} The two halves of the hypothesis enter separately, through \eqref{eq:far} and \eqref{eq:near}, and neither can be
dropped. In the coordinates used for Lemma~\ref{lem:id}(b), the pentagon with $A_3=(-\frac23,-\frac32)$,
$A_4=(\frac23,-\frac32)$ has $a_1\le a_3,a_4$ and $|T|=\frac{85}{63}|P|>\cst|P|$, and the pentagons with
$A_3=(-\varepsilon,-1-\varepsilon)$, $A_4=(\varepsilon,-1-\varepsilon)$ have $a_1\le a_2,a_5$ and $|T|/|P|\to2$ as
$\varepsilon\to0$.

{\it Remark.} Substituting \eqref{eq:triv} into Lemma~\ref{lem:id}(b) one obtains
$S^2-S\sum_ia_i+\sum_ia_ia_{i+1}=0$, which is Gauss's pentagon formula (see~\cite{Pech}); so (b) is just that formula in
variables adapted to the vertex $A_1$.

\section{The bound \texorpdfstring{$\sqrt2$}{sqrt 2} for chord quadrilaterals}\label{sec:sqrt2}

The following result is the chord variant of the classical inequality by Chakerian and Lange~\cite{CL}.

\begin{theorem}\label{thm:sqrt2}
Let $P$ be a polygon with at least four vertices, and let $m=m(P)$ be the largest area of a polygon with at most four
vertices contained in $P$. Then
\[
\qch(P)\ \le\ \min\Bigl\{\,2m,\ \frac{|P|^2}{m}\Bigr\}\ \le\ \sqrt2\,|P| .
\]
\end{theorem}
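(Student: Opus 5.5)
The last inequality is elementary: $\min\{2m,|P|^2/m\}\le\sqrt{2m\cdot|P|^2/m}=\sqrt2\,|P|$. So the plan is to construct two chord quadrilaterals of $P$, one of area at most $2m$ and one of area at most $|P|^2/m$. Both will be built from a fixed inscribed quadrilateral $Q=ABCD\subset P$ with $|Q|=m$.

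\emph{Choosing $Q$.} The area of a polygon with at most four vertices is a convex function of each vertex when the others are fixed, and $P$ is the convex hull of its vertices. Hence the maximum $m$ is attained with $A,B,C,D$ among the vertices of $P$. Since $P$ has at least four vertices, adding a fourth vertex of $P$ to any triangle enlarges its area, so we may take $Q$ to be a genuine quadrilateral. Then its sides $AB,BC,CD,DA$ and its diagonals $AC,BD$ are all chords of $P$.

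\emph{The bound $2m$.} Draw the lines through $A$ and $C$ parallel to $BD$, and the lines through $B$ and $D$ parallel to $AC$. They bound a parallelogram $R$ of area $2|Q|=2m$, whose sides are parallel to chords of $P$. Maximality of $Q$ gives $P\subset R$. Indeed, if $x\in P$ lay strictly beyond the line through $A$ parallel to $BD$, then $|xBCD|>|ABCD|$, because the area depends on the first vertex only through its distance from $BD$. This contradicts maximality, and the same argument works at $B$, $C$, $D$. Hence $\qch(P)\le 2m$.

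\emph{The bound $|P|^2/m$.} Let $\ell_1,\dots,\ell_4$ be the support lines of $P$ parallel to $AB,BC,CD,DA$ on the sides away from $Q$. They bound a quadrilateral $Q'\supset P$ whose sides are parallel to sides of $Q$, so $Q'$ is a chord quadrilateral. Let $\ell_i$ be at distance $h_i$ from the corresponding side of $Q$ and touch $P$ at $x_i$. Then $P$ contains the disjoint triangles $ABx_1,\dots,DAx_4$ together with $Q$, giving
\[
|P|\ \ge\ m+\tfrac12\textstyle\sum_i s_ih_i ,
\]
where $s_i$ are the side lengths of $Q$.

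Maximality of $Q$ also constrains the $x_i$. By the previous step each $x_i$ lies in $R$, so $x_1$ lies in the triangle cut off from $R$ by the line $AB$, and similarly for the others. This bounds each $h_i$ and keeps $Q'$ from degenerating.

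Next I would write $|Q'|$ explicitly. After an affine normalisation, for instance putting the diagonal intersection at the origin with $A,C$ on one axis and $B,D$ on the other, $|Q'|=m+\sum_i s_ih_i+(\text{quadratic terms in the }h_i)$. The target inequality then reads $m\,|Q'|\le\bigl(m+\tfrac12\sum_i s_ih_i\bigr)^2$.

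This is the step I expect to be the main obstacle. The linear terms match exactly: $m\sum_i s_ih_i$ appears on both sides. So everything reduces to comparing $m$ times the quadratic part of $|Q'|$ with $\tfrac14\bigl(\sum_i s_ih_i\bigr)^2$, subject to the constraints $x_i\in R$.

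My first attempt will be the following. Split the quadratic part of $|Q'|$ into the four corner triangles of $Q'\setminus Q$ at $A,B,C,D$. The corner at $A$ has area of the form $\lambda_A h_4h_1$, with $\lambda_A$ determined by the angle of $Q$ at $A$. Then bound each corner term by a product of the form $s_4h_4\cdot s_1h_1/(4m)$, or a similar product, using the constraints $x_i\in R$ (which say that $Q$ is "large" relative to the caps).

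If this direct estimate fails, I would follow Ismailescu's approach instead. Rather than using $Q'$ as a whole, compare $Q'$ with quadrilaterals that keep two opposite support lines parallel to sides of $Q$ and replace the other two by lines parallel to a diagonal through a vertex of $Q$. Then take the better of these candidates, and average to obtain the bound $|P|^2/m$.
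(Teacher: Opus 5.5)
Your treatment of the final inequality, of the choice of $Q=ABCD$ among the vertices of $P$, and of the parallelogram bound $\qch(P)\le 2m$ matches the paper. One small point in that step: read $|xBCD|$ as the signed area $\frac12\det(C-x,D-B)$, since $xBCD$ need not be convex; this is still at most $|\mathrm{conv}\{x,B,C,D\}|\le m$. Your $Q'$ is the paper's $\Sigma$, and $|P|\ge m+\frac12\sum_i s_ih_i$ is correct.

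The gap is the step you flag yourself. The inequality $m\,|Q'|\le\bigl(m+\frac12\sum_i s_ih_i\bigr)^2$ carries the whole bound $|P|^2/m$, and it is not proved. Moreover, the corner-by-corner estimate you propose fails. Take the regular octagon, whose largest inscribed quadrilateral is the square on alternate vertices (Remark~\ref{rem:sharp}), and map that square to $[0,1]^2$. Then $m=s_i=1$ and all $h_i$ equal some $h>0$. The quadratic part of $|Q'|$ is $(h_1+h_3)(h_2+h_4)$, made up of the corner rectangles $h_ih_{i+1}$, so your bound would require $h^2\le h^2/4$.

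No reallocation among adjacent products can rescue this, because $\frac14\bigl(\sum_i h_i\bigr)^2$ contains only $\frac12\sum_i h_ih_{i+1}$ in such products. The true inequality $(h_1+h_3)(h_2+h_4)\le\frac14(h_1+h_2+h_3+h_4)^2$ is an AM--GM that also uses the squares and the products of opposite $h$'s. It is an equality whenever $Q'$ is homothetic to $Q$. In this example $m\,|Q'|=|P|^2$ holds exactly, so neither maximality of $Q$ nor the constraints $x_i\in R$ leave any slack. For a general $Q$ the quadratic part is not even a sum of terms $\lambda h_ih_{i+1}$: pushing out only the longer, or only the shorter, base of a trapezoid gives a term $+c\,h^2$ or $-c\,h^2$. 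Your fallback via Ismailescu-type averaging is too unspecified to assess.

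The missing idea is that your reduced inequality is Minkowski's inequality for mixed areas. Each $\ell_i$ touches $P\subset Q'$, so the support numbers of $Q'$ in the outer normal directions $n_i$ of $Q$ are $h_Q(n_i)+h_i$. Hence $V(Q,Q')=\frac12\sum_i s_i\,h_{Q'}(n_i)=m+\frac12\sum_i s_ih_i=V(Q,P)$. Then $|Q|\,|Q'|\le V(Q,Q')^2$ is exactly $m\,|Q'|\le\bigl(m+\frac12\sum_i s_ih_i\bigr)^2$, and it needs no information about where the $x_i$ lie. Combined with $V(Q,P)\le|P|$, which is your disjoint-triangle estimate (or monotonicity of $V$), this gives $m\,|Q'|\le|P|^2$. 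This is Lemma~\ref{lem:mink} with $X=Q$ and $K=P$, as used in the paper. If you prefer to stay elementary, use the planar Brunn--Minkowski inequality $|Q'+tQ|^{1/2}\ge|Q'|^{1/2}+t\,|Q|^{1/2}$ together with $|Q'+tQ|=|Q'|+2t\,V(Q,Q')+t^2|Q|$; for any $t>0$ this yields the same inequality.
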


We will use the mixed area $V(K,L)$ of two convex bodies, defined by $|K+L|=|K|+2V(K,L)+|L|$; see~\cite{Sch}. It is
symmetric, monotone in each argument with respect to inclusion, $V(K,K)=|K|$, and it satisfies Minkowski's inequality
$V(K,L)^2\ge|K|\,|L|$. If $X$ is a polygon with side lengths $L_e$ and outer unit normals $n_e$, then
$V(X,K)=\frac12\sum_eL_e\,h_K(n_e)$, where $h_K(u)=\max_{x\in K}\langle x,u\rangle$ is the support function of $K$. The following is essentially Theorem 1 in \cite{Ch}. 

\begin{lemma}[Chakerian \cite{Ch}]\label{lem:mink}
Let $K$ be a convex body, let $X\subset K$ be a polygon with side lengths $L_e$ and outer unit normals $n_e$, and let
$Y$ be the intersection of the supporting half-planes $\{x:\langle x,n_e\rangle\le h_K(n_e)\}$ of $K$; so $Y$ is the
polygon circumscribed about $K$ whose sides are parallel to those of $X$. Then
\[
|X|\,|Y|\ \le\ V(X,K)^2\ \le\ |K|^2 ,\qquad\text{where}\quad V(X,K)=|X|+\tfrac12\sum_eL_e\,\delta_e
\]
and $\delta_e=h_K(n_e)-h_X(n_e)\ge0$ is the distance between the line of the side $e$ of $X$ and the parallel support
line of $K$ beyond it.
\end{lemma}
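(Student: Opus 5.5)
The plan is to read all three quantities off the support-function formula $V(X,L)=\frac12\sum_eL_e\,h_L(n_e)$, where $L_e$ and $n_e$ are the side lengths and outer normals of the polygon $X$. The first inequality will come from Minkowski's inequality and the second from monotonicity of mixed area.

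First I establish the formula for $V(X,K)$. Taking $L=X$ in the support-function formula gives $|X|=V(X,X)=\frac12\sum_eL_e\,h_X(n_e)$. Taking $L=K$ gives $V(X,K)=\frac12\sum_eL_e\,h_K(n_e)$. Subtracting, $V(X,K)=|X|+\frac12\sum_eL_e\,\delta_e$ with $\delta_e=h_K(n_e)-h_X(n_e)$. Since $X\subset K$, we have $h_X\le h_K$, so $\delta_e\ge0$. Moreover $h_X(n_e)$ is attained along the whole side $e$, so $\delta_e$ is exactly the distance from the line of $e$ to the parallel support line of $K$.

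For the upper bound, $X\subset K$ and monotonicity of $V$ in each argument give $V(X,K)\le V(K,K)=|K|$. Since $V(X,K)\ge|X|>0$, squaring yields $V(X,K)^2\le|K|^2$.

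For the lower bound I compare $V(X,K)$ with $V(X,Y)$. The outer normals of the polygon $X$ are not contained in any closed half-plane, so $Y$ is a bounded polygon. Every outer normal of $Y$ is one of the $n_e$. I claim $h_Y(n_e)=h_K(n_e)$ for every $e$. Indeed, $Y$ lies in the half-plane $\{\langle x,n_e\rangle\le h_K(n_e)\}$, so $h_Y(n_e)\le h_K(n_e)$. Conversely $K\subset Y$, so $h_K(n_e)\le h_Y(n_e)$. This holds even if some of these half-planes are redundant for $Y$, so that $Y$ touches the line only in a vertex.

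Because the support-function formula uses only the sides and normals of the first argument $X$, it follows that
\[
V(X,K)=\tfrac12\sum_eL_e\,h_K(n_e)=\tfrac12\sum_eL_e\,h_Y(n_e)=V(X,Y).
\]
Minkowski's inequality $V(X,Y)^2\ge|X|\,|Y|$ then gives $|X|\,|Y|\le V(X,K)^2$. All quantities involved are translation invariant, so the position of the origin plays no role.

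I expect no real obstacle. The only point needing care is the identity $h_Y(n_e)=h_K(n_e)$ when some side of $X$ produces a degenerate (zero-length) side of $Y$. The sandwich argument $K\subset Y\subset\{\langle x,n_e\rangle\le h_K(n_e)\}$ settles this without any case distinction.
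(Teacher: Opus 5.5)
Your proposal is correct and follows the paper's proof step for step: the identity $h_Y(n_e)=h_K(n_e)$ gives $V(X,Y)=V(X,K)$, Minkowski's inequality gives the lower bound, monotonicity gives $V(X,K)\le|K|$, and the formula for $V(X,K)$ comes from subtracting $|X|=\frac12\sum_eL_e\,h_X(n_e)$. Your extra remarks (boundedness of $Y$, the sandwich argument that covers redundant half-planes, and nonnegativity before squaring) are details the paper leaves implicit.
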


\begin{proof}
Every line $\{x:\langle x,n_e\rangle=h_K(n_e)\}$ bounds $Y$ and contains a point of $K\subset Y$; hence
$h_Y(n_e)=h_K(n_e)$ for all $e$, and therefore $V(X,Y)=\frac12\sum_eL_eh_K(n_e)=V(X,K)$. Minkowski's inequality gives
$|X|\,|Y|\le V(X,Y)^2=V(X,K)^2$, and $V(X,K)\le V(K,K)=|K|$ because $X\subset K$. The formula for $V(X,K)$ follows
from $|X|=\frac12\sum_eL_eh_X(n_e)$.
\end{proof}

\begin{figure}[t]
\centering
\begin{tikzpicture}[scale=0.840, line join=round, line cap=round, baseline=(current bounding box.center)]
  \fill[accA!42] (-2.000,-0.700)--(-1.589,-2.434)--(0.500,-1.900)--cycle;
  \fill[accA!16] (0.500,-1.900)--(2.711,-1.334)--(2.300,0.400)--cycle;
  \fill[accA!42] (2.300,0.400)--(1.811,2.466)--(-0.400,1.900)--cycle;
  \fill[accA!16] (-0.400,1.900)--(-2.489,1.366)--(-2.000,-0.700)--cycle;
  \fill[accA!42] (-2.000,-0.700)--(0.089,-0.166)--(0.500,-1.900)--cycle;
  \fill[accA!16] (0.500,-1.900)--(0.089,-0.166)--(2.300,0.400)--cycle;
  \fill[accA!42] (2.300,0.400)--(0.089,-0.166)--(-0.400,1.900)--cycle;
  \fill[accA!16] (-0.400,1.900)--(0.089,-0.166)--(-2.000,-0.700)--cycle;
  \draw[body, fill=none] (-2.000,-0.700)--(-1.130,-1.990)--(0.500,-1.900)--(1.760,-1.070)--(2.300,0.400)--(1.730,1.240)--(0.810,1.870)--(-0.400,1.900)--(-1.830,0.570)--cycle;
  \draw[insc] (-2.000,-0.700)--(0.500,-1.900)--(2.300,0.400)--(-0.400,1.900)--cycle;
  \draw[diag] (-2.000,-0.700)--(2.300,0.400);
  \draw[diag] (0.500,-1.900)--(-0.400,1.900);
  \draw[circA] (-1.589,-2.434)--(2.711,-1.334)--(1.811,2.466)--(-2.489,1.366)--cycle;
  \node[pt] at (-2.000,-0.700) {};
  \node[lab] at (-2.300,-0.680) {$a$};
  \node[pt] at (0.500,-1.900) {};
  \node[lab] at (0.630,-2.210) {$b$};
  \node[pt] at (2.300,0.400) {};
  \node[lab] at (2.600,0.380) {$c$};
  \node[pt] at (-0.400,1.900) {};
  \node[lab] at (-0.530,2.210) {$d$};
  \node[pt] at (0.089,-0.166) {};
  \node[lab] at (0.359,-0.406) {$O$};
  \node[pt, fill=accA!70!black] at (-1.589,-2.434) {};
  \node[lab, accA!70!black] at (-1.829,-2.654) {$O'$};
  \node[lab, accA!70!black] at (1.000,2.550) {$\Pi$};
  \node[lab] at (0.100,-3.050) {(a)};
\end{tikzpicture}
\hfill
\begin{tikzpicture}[scale=0.840, line join=round, line cap=round, baseline=(current bounding box.center)]
  \fill[black!14] (-2.000,-0.700)--(-1.130,-1.990)--(0.500,-1.900)--(1.760,-1.070)--(2.300,0.400)--(1.730,1.240)--(0.810,1.870)--(-0.400,1.900)--(-1.830,0.570)--cycle;
  \fill[accB!32] (-2.000,-0.700)--(-1.130,-1.990)--(0.500,-1.900)--cycle;
  \fill[accB!32] (0.500,-1.900)--(1.760,-1.070)--(2.300,0.400)--cycle;
  \fill[accB!32] (2.300,0.400)--(0.810,1.870)--(-0.400,1.900)--cycle;
  \fill[accB!32] (-0.400,1.900)--(-1.830,0.570)--(-2.000,-0.700)--cycle;
  \draw[draw=accB!75!black, line width=0.45pt] (-2.000,-0.700)--(-1.130,-1.990)--(0.500,-1.900)--(1.760,-1.070)--(2.300,0.400)--(0.810,1.870)--(-0.400,1.900)--(-1.830,0.570)--cycle;
  \draw[body, fill=none] (-2.000,-0.700)--(-1.130,-1.990)--(0.500,-1.900)--(1.760,-1.070)--(2.300,0.400)--(1.730,1.240)--(0.810,1.870)--(-0.400,1.900)--(-1.830,0.570)--cycle;
  \draw[insc] (-2.000,-0.700)--(0.500,-1.900)--(2.300,0.400)--(-0.400,1.900)--cycle;
  \draw[circB] (-2.887,-1.147)--(0.447,-2.747)--(3.076,0.611)--(-0.561,2.632)--cycle;
  \draw[thinline] (-0.790,-1.281)--(-1.130,-1.990);
  \node[lab] at (-0.716,-1.752) {$\delta_e$};
  \node[lab] at (-0.146,-1.313) {$L_e$};
  \node[pt, fill=accB!75!black] at (-1.130,-1.990) {};
  \node[pt, fill=accB!75!black] at (1.760,-1.070) {};
  \node[pt, fill=accB!75!black] at (0.810,1.870) {};
  \node[pt, fill=accB!75!black] at (-1.830,0.570) {};
  \node[lab, accB!75!black] at (-1.314,-2.234) {$p_e$};
  \node[pt] at (-2.000,-0.700) {};
  \node[lab] at (-1.670,-0.630) {$a$};
  \node[pt] at (0.500,-1.900) {};
  \node[lab] at (0.450,-1.590) {$b$};
  \node[pt] at (2.300,0.400) {};
  \node[lab] at (1.940,0.180) {$c$};
  \node[pt] at (-0.400,1.900) {};
  \node[lab] at (-0.350,1.590) {$d$};
  \node[lab, accB!75!black] at (2.080,1.560) {$\Sigma$};
  \node[lab] at (0.100,-3.050) {(b)};
\end{tikzpicture}
\hfill
\begin{tikzpicture}[scale=0.840, line join=round, line cap=round, baseline=(current bounding box.center)]
  \draw[body] (-2.000,-0.700)--(-1.130,-1.990)--(0.500,-1.900)--(1.760,-1.070)--(2.300,0.400)--(1.730,1.240)--(0.810,1.870)--(-0.400,1.900)--(-1.830,0.570)--cycle;
  \draw[insc] (-0.939,1.399)--(-1.566,-1.344)--(1.103,-1.503)--(1.730,1.240)--cycle;
  \draw[circB] (-2.398,-1.915)--(1.715,-2.159)--(2.611,1.763)--(-1.502,2.007)--cycle;
  \draw[circA] (0.583,2.814)--(-2.461,-0.017)--(-0.671,-2.671)--(2.877,-0.334)--cycle;
  \node[pt, fill=white, draw=black, line width=0.5pt, inner sep=1.2pt] at (-0.939,1.399) {};
  \node[pt, fill=white, draw=black, line width=0.5pt, inner sep=1.2pt] at (-1.566,-1.344) {};
  \node[pt, fill=white, draw=black, line width=0.5pt, inner sep=1.2pt] at (1.103,-1.503) {};
  \node[pt, fill=white, draw=black, line width=0.5pt, inner sep=1.2pt] at (1.730,1.240) {};
  \node[lab] at (0.193,-0.014) {$P_0$};
  \node[lab, accA!70!black] at (-1.950,-1.550) {$Q$};
  \node[lab, accB!75!black] at (2.620,0.950) {$Q'$};
  \node[lab] at (0.100,-3.050) {(c)};
\end{tikzpicture}
\caption{The proof of Theorem~\ref{thm:sqrt2} for a nonagon $P$ (thick outline). (a) A largest inscribed quadrilateral $abcd$ and the
parallelogram $\Pi$ through its vertices with sides parallel to its diagonals. Each corner triangle of $\Pi$ is the
image of one of the triangles $aOb$, $bOc$, $cOd$, $dOa$ under a half-turn (equal shading), so $|\Pi|=2\,|abcd|$.
(b) The quadrilateral $\Sigma$ circumscribed about $P$ with sides parallel to those of $abcd$; each of its sides
contains a vertex of $P$ (dots, such as $p_e$). The area of $abcd$ together with the four shaded triangles is
$V(abcd,P)\le|P|$, and $|abcd|\,|\Sigma|\le V(abcd,P)^2$. (c) For comparison, the configuration of the proof
in~\cite{FG}: a quadrilateral $Q$ of minimal area containing $P$, the parallelogram $P_0$ spanned by the midpoints of
its sides (white dots; they belong to $P$), and the circumscribed parallelogram $Q'$ with sides parallel to those of
$P_0$. In all panels the inscribed quadrilateral is dashed, the circumscribed quadrilateral of twice its area is
orange, and the circumscribed quadrilateral with sides parallel to it is blue. Here $|\Pi|=1.406\,|P|$,
$|\Sigma|=1.373\,|P|$, $|Q|=1.204\,|P|$ and $|Q'|=1.326\,|P|$.}
\label{fig:sqrt2}
\end{figure}

Geometrically, $V(X,K)$ is the area of $X$ together with the triangles erected over its sides whose apexes $p_e$ are
points of $K$ on the corresponding sides of $Y$ (Figure~\ref{fig:sqrt2}(b)); the monotonicity of the mixed area makes
it unnecessary to check that these triangles do not overlap. Inscribed polygons of maximal area and circumscribed
polygons with parallel sides are classical tools for problems of this kind: applied to a largest inscribed $n$-gon,
the lemma gives, together with the theorem of Sas, the bound $c_n\le\frac{2\pi}n\csc\frac{2\pi}n$ of
Chakerian~\cite{Ch}.

\begin{proof}[Proof of Theorem~\ref{thm:sqrt2}]
For points $x,y,z,w$ of $P$ let $f(x,y,z,w)=\frac12\det(z-x,\,w-y)$. This is the area of $xyzw$ if $xyzw$ is a
quadrilateral with vertices in counterclockwise order. As $f$ is an affine function of each of its arguments, the maximum is
attained at a quadruple of vertices of $P$; we fix such a quadruple $a,b,c,d$.
 These points are distinct, otherwise their
convex hull would be a triangle of area at least $m$ spanned by vertices of $P$, and adding a fourth vertex of $P$ would
give a quadrilateral of larger area. Hence $abcd$ is a quadrilateral of area $m$ with vertices in counterclockwise order. Its four
sides and two diagonals are chords of $P$.


First we construct a circumscribed chord parallelogram. For fixed $b,c,d$ the function $x\mapsto f(x,b,c,d)$ is affine, its level lines are
parallel to the diagonal $bd$, and on $P$ it is maximal at $x=a$. Hence the line through $a$ parallel to $bd$ is a
support line of $P$. The same holds at $b$, $c$ and $d$, and the four support lines bound a parallelogram $\Pi\supset
P$ whose sides are parallel to the diagonals $ac$ and $bd$. Let $O$ be the intersection point of the diagonals and
$O'$ the vertex of $\Pi$ beyond the side $ab$. Then $aO'bO$ is a parallelogram, so the triangle $aO'b$ is the image of
the triangle $aOb$ under the half-turn about the midpoint of $ab$; the same holds for the other three sides, and
therefore $|\Pi|=2m$ (Figure~\ref{fig:sqrt2}(a)).

Now, we construct another circumscribed chord quadrilateral. Let $\Sigma$ be the polygon $Y$ of Lemma~\ref{lem:mink} for $K=P$ and $X=abcd$: the
intersection of the four supporting half-planes of $P$ whose boundary lines are parallel to the sides of $abcd$. It is
bounded, because the outer normals of a quadrilateral positively span the plane, and it has at most four sides. By
Lemma~\ref{lem:mink}, $m\,|\Sigma|\le|P|^2$.

Finally, both $\Pi$ and $\Sigma$ are chord quadrilaterals of $P$. Hence $\qch(P)\le|\Pi|=2m$ and
$\qch(P)\le|\Sigma|\le|P|^2/m$, and
\[
\qch(P)^2\ \le\ |\Pi|\,|\Sigma|\ =\ 2m\,|\Sigma|\ \le\ 2\,|P|^2 . \qedhere
\]
\end{proof}

\begin{remark}[comparison with earlier proofs]\label{rem:compare}
As a bound for $q$, Theorem~\ref{thm:sqrt2} reproduces the classical inequality $c_4\le\sqrt2$ of Chakerian and
Lange~\cite{CL,Ch}. We also do not claim that the argument is completely new; our point is that it produces chord quadrilaterals.
It is close to the proof presented in~\cite[Proposition~2.3]{FG}, which is due to Ismailescu~\cite{Ism}, see also \cite{Ch}, but the
starting point is different.
One can check that for the pentagon $P$ with vertices $(0,5)$, $(-5,1)$, $(-3,-4)$, $(3,-4)$, $(5,2)$, for instance, the earlier proof does not give a chord quadrilateral. Also note that here $q(P)=\frac{582}{451}|P|<\qch(P)=\frac{1854}{1435}|P|$.

\end{remark}

\begin{remark}[sharpness of the method]\label{rem:sharp}
The two quadrilaterals $\Pi$ and $\Sigma$ cannot give a constant below $\sqrt2$. For the regular octagon $P$ the
quadrilateral $abcd$ is the square spanned by alternate vertices, and $|\Pi|=|\Sigma|=\sqrt2\,|P|$, although
$\qch(P)=\frac{1+\sqrt2}2|P|\approx1.207\,|P|$.
\end{remark}

\section{Concluding remarks}\label{sec:difficulties}




To attack Kuperberg's conjecture, by Proposition~\ref{prop:implies}, it suffices to consider polygons, but with arbitrarily many vertices. For fixed $n$, a compactness argument modulo affine maps shows that the supremum of $q(P)/|P|$, or of $\qch(P)/|P|$, over the $n$-gons
is attained at a local maximum or is approached by $n$-gons degenerating to a polygon with fewer vertices (for $\qch$
one uses that every chord of the limit is a limit of chords). So the inequalities of both conjectures would follow if
the ratio did not exceed $\cst$ at any local maximum with $n\ge6$ vertices. In our numerical experiments with $\qch$ and $n\le8$
the only local maxima found were the affine-regular hexagons and heptagons, but we cannot exclude others for all $n$. An induction
on $n$ by deleting a vertex fails: if $P_v$ denotes the convex hull of the vertices of $P$ other than $v$, the
inequality $\qch(P)\le\qch(P_v)+\cst\,(|P|-|P_v|)$, which would make the induction work, is false for the regular
hexagon $P$, where $\qch(P)=\frac43|P|$ and the right-hand side is
$\bigl(1+\frac1{2\sqrt5}\bigr)|P|\approx1.224\,|P|$; the same holds with $q$ in place of $\qch$.


The restriction to finitely many directions makes the
candidates explicit, which is an attraction of the chord conjecture, but it removes the variational tools. Sides cannot be rotated, so
there is no midpoint property (see~\cite[Proposition 2.1]{FG}), and there is no analogue of DePano's theorem (see~\cite[Lemma~2.1]{HIKP}): in $4740$
random polygons a chord quadrilateral of least area always had three sides containing sides of $P$, as in
Figure~\ref{fig:intro}, but for the hexagon with vertices $(-14,2)$, $(-4,-10)$, $(11,-14)$, $(12,2)$, $(3,14)$,
$(-8,7)$ the chord quadrilateral of least area is unique, and only two of its sides contain sides of $P$. Moreover
$\qch(P)/|P|$ is not a continuous function of $P$: a new short side brings a new direction. Compactness and perturbation arguments therefore do not carry over
directly, and one has to work with the combinatorics of the vertices.

\section{Declaration}

This work was developed through continuous collaboration between the author and AI models. OpenAI and Anthropic models contributed to proof development, exact computation, literature research, manuscript preparation and review. The author takes full responsibility for all mathematical results and the contents of this manuscript.

\section{Acknowledgement}

This research was supported by NKFIH project no. 150151. Project no. 150151 has been implemented with the support provided by the Ministry of Culture and Innovation of Hungary from the National Research, Development and Innovation Fund, financed under the ADVANCED\_24 funding scheme.

\end{document}